\newtheorem{theorem}{Theorem}[section]
\newtheorem{lemma}[theorem]{Lemma}
\theoremstyle{definition}
\newtheorem{definition}[theorem]{Definition}
\newenvironment{claim}[1]{\par\noindent\underline{Claim:}\space#1}{}
\newenvironment{claimproof}[1]{\par\noindent\underline{Proof:}\space#1}{\hfill $\blacksquare$}
\theoremstyle{remark}
\numberwithin{equation}{section}
\begin{document}

\title{A vanishing theorem for log canonical pairs after de Fernex-Ein }

%    Information for first author
\author{Chih-Chi Chou}
%    Address of record for the research reported here
\address{University of Illinois at Chicago, Department of Mathematics, Statistics, and Computer Science, Chicago IL 60607  }
%    Current address
\address{cchou20@uic.edu}

\begin{abstract}
We extend the main vanishing theorem in de Fernex-Ein \cite{DE} to singular varieties without assuming locally complete intersection.
\end{abstract}

\maketitle

\section{Introduction}

{\let\thefootnote\relax\footnote{{\it 2010 Mathematics Subject Classification}: 14B05, 14F17}}

In this note we prove the following Nadel vanishing type theorem,

 \begin{theorem}\label{nadel1}
Given a  log canonical pair $(X, \Delta ;  eZ)$, where $Z\subset X$ is a pure-dimensional reduced subscheme of codimension $e$. 
Suppose that none of the components of $Z$  is  contained in $Sing(X)\cup  Supp(\Delta)$,
then for any nef line bundles $A$ and $M$, such that $A\otimes \mathcal{O}(-K_X-\Delta)$ is ample and $M\otimes \mathcal{I}^{\otimes e}_Z$ is globally generated, we have 
\begin{equation*}
H^i(X, A\otimes M \otimes \mathcal{I}_Z)=0  \hspace{2mm} \mbox{for} \hspace{2mm}  i>0.
\end{equation*}
 \end{theorem}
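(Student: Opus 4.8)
The plan is to realize $\mathcal{I}_Z$ as the multiplier ideal $\mathcal{J}\big((X,\Delta);eZ\big)$ and then invoke a Nadel/Kawamata--Viehweg vanishing theorem, feeding in the positivity hypotheses on $A$ and $M$.

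First I would fix a common log resolution $\mu\colon Y\to X$ of $\Delta$ and of $\mathcal{I}_Z$, writing $\mathcal{I}_Z\cdot\mathcal{O}_Y=\mathcal{O}_Y(-F)$ and arranging $\mu_*^{-1}\Delta+F+\mathrm{Exc}(\mu)$ to be snc. Since $\mathcal{I}_Z$ is radical, hence integrally closed, $\mu_*\mathcal{O}_Y(-F)=\mathcal{I}_Z$. The hypothesis that no component $Z_j$ of $Z$ lies in $\mathrm{Sing}(X)\cup\mathrm{Supp}(\Delta)$, together with $Z$ being reduced of pure codimension $e$, says that at the generic point $\eta_j$ the ring $\mathcal{O}_{X,\eta_j}$ is regular of dimension $e$ with maximal ideal $\mathcal{I}_{Z,\eta_j}$; so the prime exceptional divisor $E_j$ dominating $Z_j$ has $\mathrm{ord}_{E_j}F=1$ and log discrepancy $a(E_j;X,\Delta)=e$, i.e. $E_j$ is a log canonical place of the triple $(X,\Delta;eZ)$. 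Computing multiplier ideals along the $E_j$ then gives $\mathcal{J}\big((X,\Delta);eZ\big)\subseteq\mathcal{I}_Z$, with equality at all the $\eta_j$. For the reverse inclusion I would use log canonicity of the triple: on $Y$ every prime divisor $E$ satisfies $a(E;X,\Delta)\ge e\cdot\mathrm{ord}_E F$, so the coefficient of $E$ in $K_{Y/X}-\mu^*(K_X+\Delta)-eF$ is $\ge -\min\{1,\mathrm{ord}_E F\}$, which is $\ge-\mathrm{ord}_E F$ whenever $\mathrm{ord}_E F\ge1$; rounding up yields $\lceil K_{Y/X}-\mu^*(K_X+\Delta)-eF\rceil\ge-F$ along the divisors dominating subvarieties of $Z$, hence $\mathcal{I}_Z\subseteq\mathcal{J}\big((X,\Delta);eZ\big)$ at least over the klt locus of $(X,\Delta)$, which contains a neighbourhood of $Z$. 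The upshot is the identification $\mathcal{J}\big((X,\Delta);eZ\big)=\mathcal{I}_Z$.

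Second, set $N:=A\otimes M$. Then $N\otimes\mathcal{O}_X(-K_X-\Delta)\otimes\mathcal{I}_Z^{\otimes e}$ pulls back to $\mu^*\!\big(A\otimes\mathcal{O}_X(-K_X-\Delta)\big)\otimes\big(\mu^*M\otimes\mathcal{O}_Y(-eF)\big)$: the first factor is the pullback of an ample bundle, hence nef and big, while the second is a quotient of the globally generated bundle $\mu^*(M\otimes\mathcal{I}_Z^{\otimes e})$, hence globally generated and in particular nef; so the product is nef and big. Applying Nadel vanishing on the normal variety $X$ — equivalently, Kawamata--Viehweg--Nadel vanishing for $\mu^*N-eF$ on $Y$ together with the local vanishing $R^j\mu_*=0$ $(j>0)$ and Leray — gives $H^i\big(X,N\otimes\mathcal{J}((X,\Delta);eZ)\big)=0$ for $i>0$, and combining with the identification of the first step yields $H^i(X,A\otimes M\otimes\mathcal{I}_Z)=0$ for $i>0$.

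The main obstacle is the comparison $\mathcal{J}((X,\Delta);eZ)=\mathcal{I}_Z$ outside the locally complete intersection setting of \cite{DE}: one must control the round-up $\lceil K_{Y/X}-\mu^*(K_X+\Delta)-eF\rceil$ along \emph{every} exceptional divisor of the log resolution, not only along the distinguished $E_j$, whereas de Fernex--Ein exploit the good behaviour of multiplier ideals on lci singularities (in particular lci inversion of adjunction) to bypass this. The replacement is to argue numerically on $Y$ straight from the inequality $a(E;X,\Delta)\ge e\cdot\mathrm{ord}_E F$ furnished by log canonicity, with careful bookkeeping of which divisors dominate subvarieties of $Z$. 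The one remaining subtlety — log canonical centres of $(X,\Delta)$ disjoint from $Z$, which could make $\mathcal{J}((X,\Delta);eZ)$ strictly smaller than $\mathcal{I}_Z$ — is absorbed either by restricting to the klt locus (which contains $Z$) or, in full generality, by replacing Nadel vanishing with Ambro--Fujino vanishing for the log canonical pair underlying $(X,\Delta;eZ)$, of which $Z$ is a union of log canonical centres.
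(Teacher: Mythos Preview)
Your primary line of argument—identify $\mathcal{I}_Z$ with the multiplier ideal $\mathcal{J}\big((X,\Delta);eZ\big)$ and apply Nadel vanishing—is essentially the de Fernex--Ein strategy, and it works only when $(X,\Delta)$ is klt. You diagnose the obstruction correctly: if $(X,\Delta)$ has a log canonical centre $W$ not contained in $Z$, then $\mathcal{J}\big((X,\Delta);eZ\big)\subseteq\mathcal{I}_W\cap\mathcal{I}_Z\subsetneq\mathcal{I}_Z$, and Nadel gives vanishing for the wrong sheaf. Your proposed remedy, ``replace Nadel by Ambro--Fujino vanishing for the underlying log canonical pair'', is exactly right and is precisely what the paper does---but the execution is the entire content of the proof, and your sketch stops just where the work begins.

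Concretely, the paper's argument runs as follows. One takes a \emph{factorizing} log resolution $f\colon Y\to X$ (Lemma~\ref{resolution}) that is an isomorphism over the generic points of $Z$; this isolates a reduced divisor $E_Z$ mapping onto $Z$ and guarantees that the remaining discrepancy divisors have no component dominating a component of $Z$. Writing $K_Y-f^*(K_X+\Delta)+e\cdot\mathrm{div}(\mathcal{I}_Z\cdot\mathcal{O}_Y)=P-B-E-E_Z$ with $P$ effective, $\lfloor B\rfloor=0$, and $E$ reduced, one puts the extra lc places $E$ into the \emph{boundary} $\Delta_Y=\lceil P\rceil-P+B+E$ rather than into the ideal, and sets $L=f^*(A\otimes M)+\lceil P\rceil-E_Z$. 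Two further points are essential. First, $f$ is factored through the normalized blowup $h\colon\hat X\to X$, so that $L-(K_Y+\Delta_Y)=g^*H$ for a divisor $H$ on $\hat X$ that is genuinely \emph{ample} (semiample from the hypotheses on $A,M$, and $H\cdot C>0$ for every curve $C$ because $\mathcal{O}_{\hat X}(1)$ is $h$-ample); ampleness, not merely nef-and-big, is what the invoked form of Ambro--Fujino (Theorem~\ref{fujino}(2)) requires, and it simultaneously delivers both the global vanishing on $\hat X$ and the relative vanishing $R^jh_*g_*\mathcal{O}_Y(L)=0$. Second, one must check $f_*\mathcal{O}_Y(\lceil P\rceil-E_Z)=\mathcal{I}_Z$: the factorizing resolution ensures $\lceil P\rceil$ is effective, $f$-exceptional, and shares no component with $E_Z$, so $f_*\mathcal{O}_Y(\lceil P\rceil-E_Z)=f_*\mathcal{O}_Y(-E_Z)$, which is squeezed between $\mathcal{I}_Z$ and $\sqrt{\mathcal{I}_Z}=\mathcal{I}_Z$. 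The device that circumvents the failure of your multiplier-ideal identification is precisely the separation of $E$ from $E_Z$ and its absorption into the snc boundary $\Delta_Y$; this is invisible in your sketch, and without it the argument does not go through in the strictly log canonical case.
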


In particular, suppose $Z$ is scheme-theoretically given by
\begin{equation*}
Z=H_1\cap\cdots \cap H_t
\end{equation*}
for some divisors $H_i\in \lvert  \mathcal{ L}^{\otimes d_i}\rvert$, where $ \mathcal{ L}$ is a globally generated line bundle 
such that $d_1\ge \cdots \ge d_t$.
Then
\begin{equation*}
H^i(X, A\otimes \mathcal{L}^{\otimes k}\otimes \mathcal{I}_Z)=0 \hspace{2mm} \mbox{for} \hspace{2mm} i>0, k\ge e\cdot d_1. 
\end{equation*}
This result partially\footnote{The lower bound of $k$ is bigger than the one in \cite{DE}.} 
 generalizes the main vanishing theorem in de Fernex-Ein \cite{DE}, which assumes $X$ is a locally complete intersection variety with rational singularities.

It has been some efforts generalizing  the vanishing theorem in \cite{DE}.
In \cite{Niu11} Niu proves an  analogous vanishing theorem for power of ideal sheaves.
In \cite{Niu13}, using technique of generic linkage, the vanishing theorem in \cite{DE} is generalized to 
pairs $(\mathbb{P}^n,eZ )$ which are log canonical except at finitely many points.   
In this note, we also prove this type of generalization,
 \begin{theorem}\label{thm2}
The conclusion of Theorem \ref{nadel1} is still true if  $(X, \Delta; eZ)$ is  log canonical except at finitely many points. 
\end{theorem}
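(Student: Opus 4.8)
The plan is to rerun the proof of Theorem \ref{nadel1} and to absorb the finitely many bad points into a skyscraper sheaf. Recall that the proof of Theorem \ref{nadel1} produces, from a log resolution $\mu\colon Y\to X$ of $(X,\Delta)$ and of the ideal $\mathcal{I}_Z^{e}$, the multiplier ideal $\mathcal{J}:=\mathcal{J}(X,\Delta;eZ)$, whose formation commutes with restriction to opens of $X$, which satisfies $\mathcal{J}\subseteq\mathcal{I}_Z$ (here the hypotheses that $Z$ is reduced of pure codimension $e$ and that no component of $Z$ lies in $\mathrm{Sing}(X)\cup\mathrm{Supp}(\Delta)$ are used), and for which
\[
H^i(X,\,A\otimes M\otimes\mathcal{J})=0\qquad(i>0),
\]
the vanishing resting on Kawamata--Viehweg vanishing on $Y$ and fed only by the ampleness of $A\otimes\mathcal{O}(-K_X-\Delta)$ and the global generation of $M\otimes\mathcal{I}_Z^{\otimes e}$. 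The log canonical hypothesis of Theorem \ref{nadel1} is then invoked exactly once, to upgrade $\mathcal{J}\subseteq\mathcal{I}_Z$ to $\mathcal{J}=\mathcal{I}_Z$, which turns the displayed vanishing into the assertion of the theorem.

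Under the weaker hypothesis of Theorem \ref{thm2}, let $\Sigma\subset X$ be the finite set off of which $(X,\Delta;eZ)$ is log canonical, and keep $\mathcal{J}$ as above. The inclusion $\mathcal{J}\subseteq\mathcal{I}_Z$ still holds, and since $\mathcal{J}=\mathcal{I}_Z$ at every point where the pair is log canonical --- a local statement, by the compatibility of $\mathcal{J}$ with restriction --- the coherent quotient $\mathcal{Q}:=\mathcal{I}_Z/\mathcal{J}$ is supported on $\Sigma$; in particular $\dim\mathrm{Supp}\,\mathcal{Q}\le 0$, so $H^i(X,A\otimes M\otimes\mathcal{Q})=0$ for $i>0$ by Grothendieck vanishing. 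Tensoring $0\to\mathcal{J}\to\mathcal{I}_Z\to\mathcal{Q}\to 0$ with the locally free sheaf $A\otimes M$ and passing to the long exact cohomology sequence, for every $i>0$ the group $H^i(X,A\otimes M\otimes\mathcal{I}_Z)$ sits between $H^i(X,A\otimes M\otimes\mathcal{J})=0$ and $H^i(X,A\otimes M\otimes\mathcal{Q})=0$, hence vanishes. The linear-system statement for pairs that are log canonical off a finite set follows by taking $M=\mathcal{L}^{\otimes k}$ as before.

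The main thing to check is that the two properties of $\mathcal{J}$ used here genuinely survive dropping the global log canonical assumption: that $\mathcal{J}\subseteq\mathcal{I}_Z$ still holds at the points of $\Sigma$, where the pair is most singular, and that the vanishing $H^{>0}(X,A\otimes M\otimes\mathcal{J})=0$ does not covertly use log canonicity anywhere --- that is, that the Kawamata--Viehweg input on $Y$ needs nothing beyond the ampleness of $A\otimes\mathcal{O}(-K_X-\Delta)$ and the global generation of $M\otimes\mathcal{I}_Z^{\otimes e}$, neither of which has changed. Once this bookkeeping is done the skyscraper argument is immediate; alternatively, in the spirit of \cite{Niu13}, one could reduce to the log canonical case by generic linkage, but the route above seems shorter in the present generality.
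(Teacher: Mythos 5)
Your overall skeleton---exhibit a subsheaf $\mathcal{I}^*\subseteq\mathcal{I}_Z$ whose twisted higher cohomology vanishes unconditionally, observe that $\mathcal{I}_Z/\mathcal{I}^*$ is a skyscraper supported on the finitely many non-log-canonical points, and conclude by the long exact sequence---is exactly the paper's argument. The gap lies in your identification of that subsheaf and of where log canonicity enters the proof of Theorem \ref{nadel1}. That proof is \emph{not} a multiplier-ideal argument resting on Kawamata--Viehweg: that is the de Fernex--Ein route, which the introduction explains is being avoided precisely because it needs inversion of adjunction for locally complete intersections. What the paper shows equals $A\otimes M\otimes\mathcal{I}_Z$ is $f_*L$ with $L=f^*A+f^*M-E_Z+\lceil P\rceil$, and its cohomology is killed by Ambro--Fujino's Theorem \ref{fujino} applied to $(Y,\Delta_Y)$ with $\Delta_Y=\lceil P\rceil-P+B+E$, a boundary containing the \emph{reduced} lc places $E$ with coefficient one. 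Kawamata--Viehweg does not apply to such a boundary, and the log canonical hypothesis is used exactly to guarantee that all coefficients of $\Delta_Y$ are at most one---so it is consumed in the vanishing step itself, not only in identifying the pushforward with $\mathcal{I}_Z$.

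If instead you take $\mathcal{J}$ to be the honest multiplier ideal $\mathcal{J}(X,\Delta;eZ)=f_*\mathcal{O}_Y(\lceil K_Y-f^*(K_X+\Delta)+e\cdot\mbox{div}(\mathcal{I}_Z\cdot\mathcal{O}_Y)\rceil)$, then the Nadel-type vanishing for it does follow from Kawamata--Viehweg with no lc hypothesis, but your other key claim fails: $\mathcal{J}=\mathcal{I}_Z$ at log canonical points is false in general, because $\mathcal{I}_Z/\mathcal{J}$ is supported on the images of \emph{all} non-klt centers, which can be positive dimensional even where the pair is lc. Concretely, take $X=\mathbb{A}^3$, $\Delta=\{z=0\}$, $Z$ the $z$-axis, $e=2$: the pair $(X,\Delta;2Z)$ satisfies every hypothesis of Theorem \ref{nadel1} and is log canonical everywhere, yet the blow-up of $Z$ is already a log resolution and gives $\mathcal{J}(X,\Delta;2Z)=(z)\cdot(x,y)\subsetneq(x,y)=\mathcal{I}_Z$, with quotient supported on the entire plane $\{z=0\}$. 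So with this choice of $\mathcal{J}$ the sheaf $\mathcal{Q}$ is not punctual and Grothendieck vanishing does not apply; with the paper's choice the vanishing for $\mathcal{J}$ is not a Kawamata--Viehweg statement. The repair is the one the paper makes: write the discrepancy equation of Lemma \ref{resolution} as $P-N-E_Z-F$, where $F$ collects only the non-reduced excess lying over the finitely many non-lc points, set $\mathcal{I}^*=f_*\mathcal{O}_Y(-E_Z-F)$, and keep the reduced lc places inside the Ambro--Fujino boundary $\Delta_Y$. Then $f(F)$ is finite, $\mathcal{I}_Z/\mathcal{I}^*$ really is a skyscraper, and your exact-sequence argument goes through verbatim.
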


In \cite{DE} the main application is questions related to Castelnuovo-Mumford regularity of singular subvarieties in projective spaces,
which generalizes the results in Bertram-Ein-Larzarsfeld \cite{BEL} and Chardin-Ulrich \cite{CU}. (see also Niu \cite{Niu}, \cite{Niu12} for related results).

On the contrary,
the main results of this note are in particular suitable for regularity problems of subvarieties in singular varieties.
For example, given a curve $Z$ in a Schubert variety $X$.
By result of \cite{AS}, there is some $\Delta$   such that $(X, \Delta)$ is log terminal.
and the support of $\Delta$ is in the complement of   the biggest Schubert cell $U\subset  X$ (a dense smooth open set).
So suppose the generic point of $Z$ is contained in $U$, then 
$(X; \Delta, eZ)$ is  log canonical except at finitely many points. 
Then by Theorem \ref{thm2} we have a bound of regularity in terms of the degrees of generators of $\mathcal{I}_Z$.

We make a remark  about the  the proof in this paper .
The assumption of locally complete intersection is crucial to \cite{DE} because their strategy is to approximate $\mathcal{I}_Z$ by some multiplier ideal sheaf.
To achieve this they need to use  inversion of adjunction theorem of locally complete intersection varieties.
 
In this paper , we apply a cohomology tool for log canonical pairs developed by Ambro-Fujino.
We consider $\hat{X}$, the normalization of blowing up of $X$ along $Z$.
The  idea is that when we pull back $A\otimes M\otimes \mathcal{I}_Z$ to $\hat{X}$, it becomes a line bundle and has more 
positivity properties. (Note that $\mathcal{I}_Z\cdot \mathcal{O}_{\hat{X}}$ is relative ample.) 
Using Ambro-Fujino's theorem (Theorem \ref{fujino}) we are able to reduce the vanishing result on $X$ to $\hat{X}$.

The organization of this paper goes as follows. In the second section we recall some notions we need , including the vanishing theorem by Ambro-Fujino.
The proofs of the main theorems are in the last section. \\

{\it Acknowledgements.}
I would like to thank Izzet Coskun, Lawrence Ein and Mihnea Popa for many useful 
discussions.
I am also grateful to Wenbo Niu for references and pointing out Theorem \ref{thm2}.

\section{preliminaries}

We first recall the notion of singularities of pair.
We say $(X,\Delta)$ is a pair if $X$ is normal and $K_X+\Delta $ is $\mathbb{Q}$-Gorenstein.
More generally, we consider $(X, \Delta; eZ)$ where $Z$ is any subscheme of $X$ and $e$ is a positive integer.
Take a log resolution  $f:Y\rightarrow X$ such that the support of $\mbox{Exc(f)}\cup f^{-1}_*\Delta \cup \mathcal{I}_Z\cdot \mathcal{O}_Y$ is a union of simple normal crossing divisors.

\begin{definition}
Given a resolution $f:Y\rightarrow X$ as above, suppose $\mathcal{I}_Z\cdot \mathcal{O}_Y=\mathcal{O}_Y(-F)$,
we let $ div( \mathcal{I}_Z\cdot \mathcal{O}_Y)$ denote $-F$. In particular, $e\cdot div( \mathcal{I}_Z\cdot \mathcal{O}_Y)=-eF.$
\end{definition}
A pair $(X, \Delta; eZ)$ is called log canonical if for any log resolution as above  we can write
\begin{equation*}
K_Y-f^*(K_X+\Delta)+e\cdot div( \mathcal{I}_Z\cdot \mathcal{O}_Y)=P-N
\end{equation*}
where both $P$ and $N$ are effective, without common components and all coefficients of components of $N$  are less or equal to one.

The following theorem due to Ambro-Fujino is essential to the proof of  theorem \ref{nadel1}. See \cite{Al} and \cite{OF} Theorem 6.3.
\begin{theorem}\label{fujino}
Let $(Y, \Delta)$ be a simple normal
crossing pair such that $\Delta$ is a boundary $\mathbb{R}$-divisor on Y . Let $f : Y \rightarrow X$
be a proper morphism between algebraic varieties and let $L$ be a Cartier
divisor on $Y$ such that $L - (K_Y + \Delta)$ is $f$-semi-ample.
\begin{enumerate}
\item every associated prime of $R^qf_*O_Y (L)$ is the generic point of the

$f$-image of some stratum of $(Y, \Delta)$.
\item let $\pi : X \rightarrow V$ be a projective morphism to an algebraic variety
$V$ such that
\begin{equation*}
L - (K_Y + \Delta) \sim _{\mathbb{R}} f^*H
\end{equation*}
for some $\pi$-ample $\mathbb{R}$-Cartier $\mathbb{R}$-divisor H on X. Then $R^qf∗\mathcal{O}_Y (L)$
is  $\pi _∗$-acyclic, that is,
\begin{equation*}
R^p \pi_*R^qf_*\mathcal{O}_Y(L)=0
\end{equation*}
for every $p > 0$ and $q \ge 0$.
\end{enumerate}
\end{theorem}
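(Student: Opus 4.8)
The plan is to reconstruct the Ambro--Fujino statement as the end product of the classical three-part package of Koll\'ar-type theorems, now adapted to simple normal crossing pairs: an injectivity theorem, a torsion-freeness theorem (which is exactly part~(1)), and a relative vanishing theorem (which is exactly part~(2)). The entire argument rests on Hodge theory, so the first and hardest task is to equip the cohomology of the pair $(Y,\Delta)$ with the correct mixed Hodge structure. I would begin with a generalized Koll\'ar injectivity statement: if $L$ is a Cartier divisor with $L \sim_{\mathbb{R}} K_Y+\Delta+S$ for some semi-ample $\mathbb{R}$-divisor $S$, and $D$ is an effective divisor whose support interacts well with the strata of $(Y,\Delta)$, then the multiplication map $H^q(Y,\mathcal{O}_Y(L)) \to H^q(Y,\mathcal{O}_Y(L+D))$ is injective. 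Since $(Y,\Delta)$ is only an SNC pair and $\Delta$ carries real coefficients, I would first pass to a suitable cyclic (Kawamata-type) cover to turn the $\mathbb{R}$-linear equivalence and the section cutting out $D$ into honest integral data. Then I would run Koll\'ar's argument: the relevant groups carry a mixed Hodge structure coming from the logarithmic de Rham complex of the pair, and $E_1$-degeneration (equivalently, strictness of the Hodge filtration) forces the multiplication map to be a strict morphism of Hodge structures, hence injective. The semi-ampleness of $S$ is precisely what permits choosing $D$ as a general member of a large multiple of $S$.

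With injectivity available, I would deduce part~(1) by a local argument on $X$. The assertion is that $R^q f_*\mathcal{O}_Y(L)$ has no associated primes beyond the generic points of $f$-images of strata of $(Y,\Delta)$. Localizing at a hypothetical bad associated prime (or working \'etale-locally around it), I would use the $f$-semi-ampleness of $L-(K_Y+\Delta)$ to manufacture the divisors $D$ that feed into the injectivity theorem; injectivity then shows that a local section supported on such a prime is forced to vanish, contradicting its being an embedded or otherwise illegitimate associated prime. This is the standard passage from injectivity to torsion-freeness, and once the injectivity input is in hand it is essentially formal.

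For part~(2), namely $R^p\pi_* R^q f_*\mathcal{O}_Y(L)=0$ for $p>0$, I would argue by induction on $\dim V$, exploiting that $H$ is $\pi$-ample. Replacing $H$ by a sufficiently large multiple and taking a general member, the torsion-freeness from part~(1) guarantees that the restriction sequence of the general member stays exact after applying $R^q f_*$; the resulting long exact sequence, combined with the inductive hypothesis on the hyperplane section and Serre vanishing for $\pi$-ample sheaves on $V$, annihilates the higher direct images $R^p\pi_*$. Here again a covering trick reduces the $\mathbb{R}$-boundary $\Delta$ and the $\mathbb{R}$-equivalence $L-(K_Y+\Delta)\sim_{\mathbb{R}}f^*H$ to the integral Cartier case, so that the Hodge-theoretic inputs apply verbatim.

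The main obstacle is concentrated entirely in the first step: constructing the mixed Hodge structure and proving its $E_1$-degeneration for a reducible, non-reduced boundary configuration carrying real coefficients. Controlling the residues along the strata and establishing strictness in this generality is exactly where Ambro's quasi-log and $b$-divisor formalism, or Fujino's Hodge-theoretic package, does the heavy lifting. By contrast, the descent from injectivity to torsion-freeness in part~(1) and the inductive hyperplane argument for the vanishing in part~(2) are, comparatively, formal consequences once the injectivity theorem is secured.
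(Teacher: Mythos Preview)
The paper does not prove this theorem; it is quoted verbatim from the literature (Ambro \cite{Al} and Fujino \cite{OF}, Theorem~6.3) and used as a black box in the proofs of Theorems~\ref{nadel1} and~\ref{thm2}. There is therefore no ``paper's own proof'' to compare your proposal against.

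That said, your outline is an accurate high-level summary of how Fujino actually establishes the result in \cite{OF}: one first proves a Koll\'ar-type injectivity theorem for simple normal crossing pairs via mixed Hodge theory (after suitable covering tricks to handle the $\mathbb{R}$-coefficients), then deduces the torsion-freeness statement of part~(1), and finally obtains the relative vanishing of part~(2) by an induction using general hyperplane sections whose exactness is guaranteed by part~(1). Your identification of the Hodge-theoretic injectivity step as the substantive core, with parts~(1) and~(2) following by comparatively formal arguments, matches the structure of Fujino's proof. So while your sketch is in good shape as a roadmap to the cited result, it is not something this paper undertakes, and for the purposes of this paper no proof is required beyond the citation.
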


\section{Proofs of main theorems}

Through out this section
we consider a pair $(X, \Delta; eZ)$, where $Z\subset X$ is a pure-dimensional reduced subscheme of codimension $e$.
We also require that all of  the irreducible components of $Z$ are not contained in either sing$(X)$ or supp($\Delta$).
We first prove the following lemma needed later.

\begin{lemma}\label{resolution}
There is a log resolution $f:Y\rightarrow (X, \Delta; eZ)$ such that,
\begin{equation*}
K_{Y}-f^*(K_X+\Delta)+e\cdot \mbox{div}(\mathcal{I}_Z\cdot \mathcal{O}_Y)=P-N-E_Z,
\end{equation*}
  where both $P$ and $N$ are effective,  and $E_Z$ is the sum of all components of the support of $\mathcal{I}_Z\cdot \mathcal{O}_Y$ mapping to the generic points of  $Z$.
And the support of $P$ and $N$ does not contain any divisor mapping to any generic points of  $Z$.
In particular, $f(E_Z)=Z$.
\end{lemma}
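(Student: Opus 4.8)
The plan is to construct the desired resolution in two stages and then track discrepancies carefully. First I would blow up $X$ along $Z$ and normalize, obtaining $\hat X \to X$; since $Z$ has codimension $e$ and is generically reduced and not contained in $\mathrm{Sing}(X) \cup \mathrm{Supp}(\Delta)$, the exceptional divisor over the generic point of each component of $Z$ appears with multiplicity one in $\mathcal{I}_Z\cdot\mathcal{O}_{\hat X}$, at least after restricting to the smooth locus of $X$ where $Z$ is a local complete intersection. Then I would take any log resolution $f\colon Y\to X$ that factors through this blow-up (so that $\mathcal{I}_Z\cdot\mathcal{O}_Y = \mathcal{O}_Y(-F)$ is an effective Cartier divisor with SNC support) and such that $\mathrm{Exc}(f)\cup f^{-1}_*\Delta\cup \mathrm{Supp}(F)$ is SNC. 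Write $E_Z$ for the (reduced) sum of those components of $F$ whose image is a generic point of $Z$, so $f(E_Z)=Z$.

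Next I would expand the discrepancy formula. By definition of a log resolution we can write
\begin{equation*}
K_Y - f^*(K_X+\Delta) + e\cdot\mathrm{div}(\mathcal{I}_Z\cdot\mathcal{O}_Y) = K_Y - f^*(K_X+\Delta) - eF = A - B
\end{equation*}
with $A,B$ effective without common components. The key local computation is at the generic point $\xi$ of a component of $Z$: there $X$ is smooth and $\Delta=0$, $Z$ is a reduced complete intersection of codimension $e$, and blowing up a smooth subvariety of codimension $e$ in a smooth variety produces a single exceptional divisor $E$ with $K_Y = f^*K_X + (e-1)E$ and $\mathrm{div}(\mathcal{I}_Z\cdot\mathcal{O}_Y) = -E$ locally. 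Hence the coefficient of $E$ in the left-hand side is $(e-1) - e = -1$, which is exactly the statement that $E_Z$ appears with coefficient $-1$ and that no other component of $A-B$ dominates a generic point of $Z$. So I would set $P := A$, $N := B - E_Z$, and check that $N$ is effective (the coefficient of $E_Z$ in $B$ is exactly one, so subtracting $E_Z$ leaves $N$ effective) and that $P,N$ have no common component and contain no divisor mapping onto a generic point of $Z$.

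The main obstacle I expect is the bookkeeping at the generic points of $Z$: one must be certain that \emph{every} component of $F$ mapping to a generic point of $Z$ contributes coefficient exactly $-1$ on the left-hand side (so that it lands in $-E_Z$ and not in $P$ or $N$), and that after blowing up there is no ``extra'' exceptional divisor over $\xi$ with a different discrepancy — this is where factoring through the normalized blow-up of $Z$ and working over the smooth locus of $X$, where $Z$ is lci, is essential. A secondary point is to make sure the resolution can be chosen to satisfy all the SNC requirements simultaneously with factoring through $\mathrm{Bl}_Z X$; this follows from taking a common log resolution dominating both, and the discrepancy identity is resolution-independent in the sense needed because the coefficient of each prime divisor is determined by valuations. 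Once the local picture at each $\xi$ is pinned down, the global statement, including $f(E_Z)=Z$, is immediate from the definitions.
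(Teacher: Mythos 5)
Your local computation at a generic point $\xi$ of a component of $Z$ is the right heart of the matter: there $X$ is smooth, $\xi\notin\mathrm{Supp}(\Delta)$, and $\mathcal{I}_{Z,\xi}=\mathfrak{m}_\xi$ is generated by a regular system of parameters, so the blow-up of $Z$ contributes a single divisor with coefficient $(e-1)-e=-1$; and the bookkeeping $P:=A$, $N:=B-E_Z$ is fine \emph{once you know that these are the only divisors of $Y$ lying over the generic points of $Z$}. The gap is in your construction of $Y$: ``any log resolution that factors through $\mathrm{Bl}_Z X$'' does not ensure this, and your proposed fix via ``a common log resolution dominating both'' makes it worse, since such a resolution will in general perform further blow-ups over $\xi$. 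The extra divisors so created still lie in the support of $\mathcal{I}_Z\cdot\mathcal{O}_Y$ and still map to the generic points of $Z$ --- hence are components of $E_Z$ as the lemma defines it --- but their coefficients need not be $-1$. Concretely, take $Z$ a reduced point in $X=\mathbb{A}^2$ (so $e=2$), blow up the origin and then a point of the first exceptional divisor: the second exceptional divisor $E_2$ has $\mathrm{ord}_{E_2}(K_{Y/X})=2$ and $\mathrm{ord}_{E_2}(\mathcal{I}_Z)=1$, so its coefficient on the left-hand side is $2-2\cdot 1=0$, and the identity then forces $E_2$ into the support of $P$, contradicting the final requirement of the lemma. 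Your remark that ``the coefficient of each prime divisor is determined by valuations'' is true but beside the point: the issue is which divisors appear on $Y$ at all, and that depends on the choice of $Y$.

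What is actually needed --- and what the paper arranges --- is a resolution that creates nothing new over the generic points of $Z$. The paper proceeds in the opposite order from yours: it first takes a \emph{factorizing} resolution $f_1:Y_1\to X$ of $Z$ in $X$ which is simultaneously a log resolution of $(X,\Delta)$ (this is the nontrivial input, Corollary 3.2 of \cite{EE}, and is where the hypotheses that no component of $Z$ lies in $\mathrm{Sing}(X)\cup\mathrm{Supp}(\Delta)$ are used); this $f_1$ is an isomorphism over the generic points of $Z$ and satisfies $\mathcal{I}_Z\cdot\mathcal{O}_{Y_1}=\mathcal{I}_{Z_1}\cdot\mathcal{L}$ with $Z_1$ the smooth strict transform and $\mathcal{L}$ a line bundle whose support misses the generic points of $Z_1$. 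It then performs the single blow-up of $Y_1$ along $Z_1$ and stops. By construction the unique divisor over each generic point of $Z$ is the $(e-1)$-discrepancy one with $\mathrm{ord}=1$ on $\mathcal{I}_Z$, so the coefficient $-1$ and the assertion about $P$ and $N$ come out for free. You could instead repair your version by requiring the log resolution to be an isomorphism over a neighborhood of the generic points of the exceptional divisor of the normalized blow-up (which is legitimate, since that locus is already smooth with smooth exceptional divisor), but some such condition must be imposed explicitly; as written, the existence claim is not established.
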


\begin{proof}

Since no component of $Z$ is contained in  $ \mbox{sing}(X)$  , 
we can take factorizing resolution of $Z$ in $X$.
Moreover, since  no component of $Z$ is contained in  supp$( \Delta)$  ,
by Corollary 3.2 in \cite{EE} we can take this resolution to be a log resolution of $(X, \Delta)$.
So we have a log resolution $f:Y_1\rightarrow (X, \Delta)$ such that\\
(i) $f_1$ is an isomorphism over the generic points of $Z$.\\
(ii)$\mathcal{I}_{Z}\cdot \mathcal{O}_{Y_1}=\mathcal{I}_{Z}\cdot \mathcal{L}$, where $\mathcal{L}$ is a line bundle.\\
(iii)The strict transform $Z_1\subset Y_1$ of $Z$ is smooth and $Z_1\cup\mbox{exc}(f_1)\cup \mbox{supp}(f_1^{-1}\Delta)$ is simple normal crossing.

Next we take the blow up of $Y_1$ along $Z_1$ and denote it by $f_2:Y_2\rightarrow Y_1$, and let $Z_2$ be the exceptional divisor of $f_2$ . In summary, we have the following diagram.
  \begin{displaymath}
    \xymatrix{ Z_2\ar[d]\ar@{^{(}->}[r] & Y_2 \ar[d]^{f_2} \\
               Z_1\ar[d]\ar@{^{(}->}[r]& Y_1\ar[d]^{f_1}\\
               Z \ar@{^{(}->}[r]&X }
\end{displaymath}

Let $K_{Y_1}-f_1^*(K_X+\Delta)=\sum a_iE_i$.
Since $f_1$ is an isomorphism over the generic points of $Z$, none of the $E_i$'s is mapping to generic points of  $Z$. 
On the other hand, $f_2$ is an blowing up of smooth variety along smooth subscheme of codimension $e$, we have
$K_{Y_2}-f_2^*K_{Y_1}=(e-1)Z_2$.

Let $\mathcal{I}_Z\cdot \mathcal{O}_{Y_2}=\mathcal{O}_{Y_2}(-Z_2-F)$ where $Z_2$ is mapped to generic points of $Z$ and $F$ is mapped to non-generic points.
Then we have 
\begin{align*}
\hspace{.5in}& K_{Y}-f^*(K_X+\Delta)+e\cdot div(\mathcal{I}_Z\cdot \mathcal{O}_{Y_2})\\
&=K_{Y_2}-f_2^*K_{Y_1}+f_2^*(K_{Y_1}-f_1^*(K_X+\Delta))\underbrace{-e\cdot Z_2-e\cdot F}_\text{$e\cdot div(\mathcal{I}_Z\cdot \mathcal{O}_{Y_2})$}\\
&=(e-1)\cdot Z_2+f_2^*(\sum a_iE_i)-e\cdot Z_2-e\cdot F\\
&:=P-N-Z_2
\end{align*}

Since none of the $E_i$'s  passes through the generic point of $Z_1$, the support of $f_2^*(\sum a_iE_i)$
 does not contain any component of  $Z_2$.
In conclusion,  the support of $P$ and $N$  does not contain any divisor dominating any component of $Z$.
\end{proof}

\begin{theorem}(=Theorem \ref{nadel1})\label{nadel}
Suppose the pair $(X, \Delta; eZ)$ is  log canonical. 
Then for any nef  line bundles $A$ and $M$, such that $A\otimes \mathcal{O}(-K_X-\Delta)$ is ample and $M\otimes \mathcal{I}^{\otimes e}_Z$ is globally generated, we have 
\begin{equation*}
\mbox{H}^i(X, A\otimes M \otimes \mathcal{I}_Z)=0  \hspace{2mm} \mbox{for} \hspace{1mm} i>0.
\end{equation*}

\end{theorem}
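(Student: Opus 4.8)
The plan is to transfer the vanishing statement from $X$ up to a suitable modification where the relevant sheaf becomes a line bundle with enough positivity to apply Ambro-Fujino's Theorem \ref{fujino}. Concretely, let $f\colon Y\to X$ be the log resolution furnished by Lemma \ref{resolution}, so that
\begin{equation*}
K_Y-f^*(K_X+\Delta)+e\cdot\mathrm{div}(\mathcal{I}_Z\cdot\mathcal{O}_Y)=P-N-E_Z,
\end{equation*}
with $P,N$ effective, sharing no components, $N$ reduced-coefficient (the pair is log canonical), and $E_Z$ the reduced sum of the components over the generic points of $Z$. Write $\mathcal{I}_Z\cdot\mathcal{O}_Y=\mathcal{O}_Y(-F_0)$ where $F_0=E_Z+F$ with $F$ not dominating any component of $Z$. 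The first step is to observe that, by the projection formula together with $f_*\mathcal{O}_Y(-F_0)=\mathcal{I}_Z$ and $R^qf_*\mathcal{O}_Y(-F_0)=0$ for $q>0$ (which one should justify: $F_0$ is $f$-antinef being a blow-up type exceptional locus, or invoke part (1) of Theorem \ref{fujino} applied with a suitable boundary), the groups $H^i(X,A\otimes M\otimes\mathcal{I}_Z)$ agree with $H^i(Y,f^*(A\otimes M)\otimes\mathcal{O}_Y(-F_0))$ via a Leray spectral sequence argument.

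The second step is to choose the boundary on $Y$. Set $L=f^*(A\otimes M)-F_0$ as a Cartier divisor and let $\Delta_Y=N+E_Z+\{\text{small multiple of a general member}\}$; more precisely I would take $\Delta_Y$ to be $N+E_Z$ plus an appropriate effective $\mathbb{R}$-divisor supported on the non-dominating part so that $\Delta_Y$ is a boundary (coefficients in $[0,1]$) with simple normal crossing support, and so that
\begin{equation*}
L-(K_Y+\Delta_Y)=f^*(A\otimes M)-F_0-K_Y-\Delta_Y.
\end{equation*}
Using the Lemma's formula, $K_Y+E_Z=f^*(K_X+\Delta)-e\cdot\mathrm{div}(\mathcal{I}_Z\cdot\mathcal{O}_Y)+P-N=f^*(K_X+\Delta)+eF_0+P-N$, so $L-(K_Y+\Delta_Y)=f^*(A\otimes M-K_X-\Delta)-(e+1)F_0-P$ (absorbing $-N$ against $\Delta_Y$). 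Since $M\otimes\mathcal{I}_Z^{\otimes e}$ is globally generated, $\mathcal{O}_Y(-eF_0)\otimes f^*M$ is globally generated, hence $f$-semiample, and one more copy of $\mathcal{O}_Y(-F_0)$ is $f$-ample relative to $X$; meanwhile $f^*(A\otimes\mathcal{O}(-K_X-\Delta))$ pulls back an ample class and $-P$ is $f$-exceptional-ish. The point is to arrange $L-(K_Y+\Delta_Y)\sim_{\mathbb{R}} f^*H$ with $H$ ample on $X$ (taking $V$ a point, $\pi\colon X\to V$): the $f$-ample correction $-F_0$ twisted suitably, plus the pulled-back ample bundle, should let me write this difference as a pullback of an ample $\mathbb{R}$-divisor up to an $f$-nef error, and then perturb. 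Granting this, part (2) of Theorem \ref{fujino} gives $R^p\pi_* R^qf_*\mathcal{O}_Y(L)=0$ for all $p>0$, i.e. $H^p(X,R^qf_*\mathcal{O}_Y(L))=0$ for $p>0$; combined with the Leray spectral sequence and the vanishing of higher direct images of $\mathcal{O}_Y(-F_0)$, this yields $H^i(Y,\mathcal{O}_Y(L))=H^i(X,A\otimes M\otimes\mathcal{I}_Z)$ and the former vanishes for $i>0$.

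The main obstacle I anticipate is bookkeeping the positivity in the decomposition of $L-(K_Y+\Delta_Y)$: one must simultaneously (a) keep $\Delta_Y$ a genuine boundary with snc support so that the hypotheses of Theorem \ref{fujino} hold, (b) ensure the log canonical assumption is exactly what forces the $N$-coefficients $\le 1$ so they can be placed into $\Delta_Y$, and (c) combine the $f$-semiample part coming from global generation of $M\otimes\mathcal{I}_Z^{\otimes e}$, the $f$-ample part coming from the extra $\mathcal{I}_Z$-blow-up, and the pulled-back ample bundle $A\otimes\mathcal{O}(-K_X-\Delta)$ into a single class of the form $f^*H$ with $H$ ample — most cleanly by a Kodaira-type perturbation, writing the ample bundle on $X$ as a small ample plus the negativity needed to swallow the $-P$ term and the relative corrections. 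The snc and boundary conditions are where Lemma \ref{resolution} is used in an essential way, since it guarantees $P,N$ do not meet the dominating components $E_Z$. A secondary point to check carefully is that $f_*\mathcal{O}_Y(L)=A\otimes M\otimes\mathcal{I}_Z$ exactly (not a larger sheaf), which again follows because $f_*\mathcal{O}_Y(-F_0)=\mathcal{I}_Z$ as $F_0\ge E_Z$ captures the scheme structure of $Z$ and the rest of $F_0$ is $f$-exceptional over $Z$.
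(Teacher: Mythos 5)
Your overall strategy---take the resolution of Lemma \ref{resolution}, use log canonicity to place $N$ into a boundary $\Delta_Y$, and invoke Theorem \ref{fujino}(2)---is the right one, but the step you yourself flag as the ``main obstacle'' is a genuine gap, and it is precisely where the paper's argument does something you are missing. Theorem \ref{fujino}(2) requires the \emph{exact} relation $L-(K_Y+\Delta_Y)\sim_{\mathbb{R}}f^*H$; ``pullback of an ample divisor up to an $f$-nef error, then perturb'' is not an admissible hypothesis, and no perturbation can produce it, because any class of the form $f^*H$ is $f$-numerically trivial, whereas your class contains $-(e+1)F_0-P$ and $-F_0$ has positive degree on curves contracted by $f$ over the generic points of $Z$ (it restricts to the relatively ample $\mathcal{O}(1)$ of the blow-up of $\mathcal{I}_Z$). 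So with the morphism $f\colon Y\to X$ and $\pi\colon X\to\mathrm{pt}$ the hypothesis of the theorem simply cannot be satisfied. The paper's resolution is to interpose $h\colon\hat{X}\to X$, the normalization of the blow-up of $X$ along $Z$, through which $f$ factors as $g\colon Y\to\hat{X}$: then $-eF_0=g^*\bigl(e\,\mathcal{O}_{\hat{X}}(1)\bigr)$ \emph{is} a pullback, from $\hat{X}$ rather than from $X$, and $H=h^*(A-K_X-\Delta)+h^*M+e\,\mathcal{O}_{\hat{X}}(1)$ is ample on $\hat{X}$ (semiample and positive on every curve, since $\mathcal{O}_{\hat{X}}(1)$ is $h$-ample). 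Theorem \ref{fujino}(2) is then applied twice, once with $\pi\colon\hat{X}\to\mathrm{pt}$ to get $H^i(\hat{X},g_*L)=0$ and once with $\pi=h$ to get $R^jh_*g_*L=0$ for $j>0$, which together give $H^i(X,h_*g_*L)=0$.

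Two further problems with the execution. First, your choice $L=f^*(A\otimes M)-F_0$ leaves $e+1$ copies of $-F_0$ and a dangling $-P$ in $L-(K_Y+\Delta_Y)$; global generation of $M\otimes\mathcal{I}_Z^{\otimes e}$ only controls $f^*M-eF_0$, and neither the extra $-F_0$ nor $-P$ can be absorbed into a pullback or into a boundary. The paper instead takes $L=f^*A+f^*M-E_Z+\lceil P\rceil$ with $\Delta_Y=\lceil P\rceil-P+B+E$, so that $P$ cancels exactly and precisely $e$ copies of $-F_0$ remain. (Your aside about adding a general member to $\Delta_Y$ could in principle give an alternative proof over $X$ directly---absorb a general member of the free system $|f^*M-eF_0|$ into the boundary by Bertini, leaving $f^*(A-K_X-\Delta)$---but this only works after fixing $L$ as above, and you do not carry it out.) Second, your opening reduction via $R^qf_*\mathcal{O}_Y(-F_0)=0$ for $q>0$ is both unjustified (this is not a formal consequence of $F_0$ being of ``blow-up type'') and unnecessary: the paper never computes cohomology on $Y$, only the pushforwards $g_*L$ and $h_*g_*L$, and identifies the latter with $A\otimes M\otimes\mathcal{I}_Z$ by squeezing the ideal sheaf $f_*\mathcal{O}_Y(-E_Z)$ between $\mathcal{I}_Z$ and $\sqrt{\mathcal{I}_Z}=\mathcal{I}_Z$.
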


\begin{proof}
Let $h:\hat{X}\rightarrow X$ be the normalization of blowing up of $X$ along $Z$, and line bundle $\mathcal{O}_{\hat{X}}(1)= \mathcal{I}_Z\cdot \mathcal{O}_{\hat{X}}$. Note that $\mathcal{O}_{\hat{X}}(1)$ is $h$-ample and $h_*\mathcal{O}_{\hat{X}}(1)=\bar {\mathcal{I}}_Z=\mathcal{I}_Z$ since $\mathcal{I}_Z$ is a radical ideal.

Take a log resolution $f:Y\rightarrow (X,\Delta; eZ)$ as in lemma \ref{resolution}.
Since the inverse image $\mathcal{I}_Z\cdot \mathcal{O}_Y$ is invertible and $Y$ is smooth, we see that $f$
 factors through $h$, with morphism $g:Y \rightarrow \hat{X}$.
 We also note that  $g^*\mathcal{O}_{\hat{X}}(1)=\mathcal{I}_Z\cdot \mathcal{O}_Y$.
Then by the log canonical assumption we have 
\begin{equation}\label{equation}
K_Y-f^*(K_X+\Delta)+e\cdot \mbox{div}(g^*\mathcal{O}_{\hat{X}}(1))=P-B-E-E_Z,
\end{equation}
where $P$ is effective, $\lfloor B\rfloor =0 $, $E$ is reduced and $E_Z$ is the components of the support of $\mathcal{I}_Z\cdot \mathcal{O}_Y$ mapping to the generic points  of  $Z$.  
Note that the support of $P$ does not dominate any component of $Z$ by lemma \ref{resolution}.

To apply theorem \ref{fujino}, we go as following.
First we define $\Delta _Y=\lceil P\rceil-P+B+E $, then $(Y, \Delta_Y)$ is a simple normal crossing pair.
Then we define $L$ by adding $f^*A+f^*M+\Delta_Y$ to both hand sides of equation (\ref{equation}), that is,
\begin{equation*}
L:=f^*A+f^*M+K_Y-f^*(K_X+\Delta)+e\cdot \mbox{div}(g^*\mathcal{O}_{\hat{X}}(1))+\Delta_Y=f^*A+f^*M-E_Z+\lceil P\rceil
\end{equation*}
Note that then $L$ is a line bundle.

To apply Theorem \ref{fujino} (2), we calculate
\begin{align*}
 L-(K_Y+\Delta_Y) &= f^*A-f^*(K_X+\Delta)+f^*M+e\cdot \mbox{div}(g^*\mathcal{O}_{\hat{X}}(1)) \\
 &= g^*(\underbrace{h^*A-h^*(K_X+\Delta)}_\text{semiample}+\underbrace{h^*M+e\cdot \mathcal{O}_{\hat{X}}(1)}_\text{globally genarated})\\
 &:= g^*H
\end{align*}
because by assumption $A-K_X$ is ample and $M\otimes \mathcal{I}^{\otimes e}_Z$ is globally generated.
In particular, $H$ is semiample.
Plus the fact that $H.C>0$ for every curve $C$ on $\hat{X}$ (easy to check),
we conclude that $H$ is in fact an ample line bundle on $\hat{X}$.
Then by Theorem \ref{fujino} (2), we have $H^i(\hat{X}, g_*L)=0  \hspace{2mm}  \mbox{for} \hspace{1mm}  i>0.$
Moreover, $H$ is also $h$-ample. 
So by loc. cit. we have $R^jh_* g_*L=0, \forall j>0.$ 
As a result,
\begin{equation}\label{cohomology}
0=H^i(\hat{X}, g_*L)=H^i(X,h_* g_*L ), \forall i > 0.
\end{equation}
The theorem follows from this equation and the next claim.
\begin{claim}
 $h_*g_*L=f_*L= A\otimes M \otimes \mathcal{I}_Z$.\\
\end{claim}
\begin{claimproof} Since $L=f^*A+f^*M-E_Z+\lceil P\rceil$, by projection formula it suffices to prove 
\begin{equation}
h_*\mathcal{O}_{\hat{X}}(1)=h_*(g_*\mathcal{O}_Y(\lceil P\rceil-E_Z)).
\end{equation}
To this aim, note that 
\begin{equation*}
\mathcal{I}_Z=h_*\mathcal{O}_{\hat{X}}(1)\subset h_*g_*(\mathcal{O}_Y(\lceil P\rceil)+g^*\mathcal{O}_{\hat{X}}(1))\subset h_*g_*\mathcal{O}_Y(\lceil P\rceil -E_Z) \subset \mathcal{O}_X
\end{equation*}
In other words, $h_*g_*\mathcal{O}_Y(\lceil P\rceil-E_Z)$ is an ideal sheaf on $X$.
By Lemma \ref{resolution}, $\lceil P\rceil$ and $E_Z$ do not have common components and $\lceil P\rceil$ is an effective exceptional divisor, so 
\begin{equation*}
h_*g_*\mathcal{O}_Y(\lceil P\rceil-E_Z)=f_*\mathcal{O}_Y(\lceil P\rceil-E_Z)=f_*\mathcal{O}_Y(-E_Z).
\end{equation*}
Note that  $f_*\mathcal{O}_Y(-E_Z)$ is an ideal sheaf determines a scheme set theoretically equal to $Z$.
So we have
\begin{equation*}
\mathcal{I}_Z\subset f_*\mathcal{O}_Y(-E_Z)\subset \sqrt{\mathcal{I}_Z}.
\end{equation*}
 Hence we have equation (3.3) and  the claim.
\end{claimproof}

\end{proof}

With same notations as Theorem \ref{nadel},
\begin{theorem}(=Theorem \ref{thm2})
The conclusion of Theorem \ref{nadel} is still true if  $(X, \Delta; eZ)$ is  log canonical except at finitely many points. 
\end{theorem}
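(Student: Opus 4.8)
The plan is to reduce the "log canonical except at finitely many points" case to the genuinely log canonical case treated in Theorem \ref{nadel}, by perturbing the pair and exploiting that the failure locus is $0$-dimensional. Let $S = \{p_1,\dots,p_m\}$ be the finite set of points where $(X,\Delta;eZ)$ fails to be log canonical; by hypothesis these points are isolated in $X$ and (since the components of $Z$ are not contained in $\mathrm{sing}(X)\cup\mathrm{supp}(\Delta)$) we may keep the geometry of Lemma \ref{resolution} intact away from them. First I would run exactly the same construction as in the proof of Theorem \ref{nadel}: take $h:\hat X\to X$ the normalized blow-up of $X$ along $Z$, a log resolution $f:Y\to X$ factoring as $g:Y\to\hat X$, and write
\begin{equation*}
K_Y-f^*(K_X+\Delta)+e\cdot\mathrm{div}(g^*\mathcal{O}_{\hat X}(1))=P-B-E-E_Z,
\end{equation*}
but now $B$ need no longer have $\lfloor B\rfloor=0$ and $E$ need not be reduced along the components of $f^{-1}(S)$: the "bad" coefficients (those exceeding the log canonical bound) all lie over the finite set $S$.

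Next I would absorb the bad part into the ample side. Because $A\otimes\mathcal{O}(-K_X-\Delta)$ is ample, there is room to replace it by a slightly smaller ample class: pick a general effective $\mathbb{Q}$-divisor $D\sim_{\mathbb{Q}}(1-\epsilon)(A-K_X-\Delta)$, or more directly choose an effective divisor supported near $S$ whose pullback to $Y$ cancels the excess coefficients of $B+E$, at the cost of shrinking $H$ by an arbitrarily small amount while keeping it ample and $h$-ample. Concretely, since $f(P),f(B),f(E)$ meet $S$ only, one can add to $\Delta$ a small multiple of a local equation cutting out a divisor through the $p_i$ so that the new pair $(Y,\Delta_Y')$ with $\Delta_Y'=\lceil P\rceil-P+B'+E'$ has $B',E'$ now a genuine boundary ($\lfloor B'\rfloor=0$, $E'$ reduced) and is a simple normal crossing pair. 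The divisor $L'=f^*A+f^*M-E_Z+\lceil P\rceil$ (possibly with a small exceptional correction over $S$) is still Cartier, and $L'-(K_Y+\Delta_Y')=g^*H'$ with $H'$ still ample on $\hat X$ and $h$-ample, so Theorem \ref{fujino}(2) gives $H^i(\hat X,g_*L')=0$ for $i>0$ and $R^jh_*g_*L'=0$ for $j>0$, hence $H^i(X,h_*g_*L')=0$ for $i>0$.

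Finally I would redo the sheaf computation of the Claim to check that the perturbation has not changed the direct image: one needs $h_*g_*L'=A\otimes M\otimes\mathcal{I}_Z$ again. Away from $S$ nothing has changed, so the two sheaves agree on $X\setminus S$; since $\mathcal{I}_Z$ and $h_*g_*L'$ are both subsheaves of $\mathcal{O}_X$ (ideal sheaves) and agree in codimension $\le 1$ — indeed they agree outside the finite set $S$, which has codimension $\ge 2$ because no component of $Z$ passes through the "bad'' points in a dimension-$1$ way, so that $Z$ and $S$ interact only in codimension $\ge 2$ — and because $h_*g_*L'$ is torsion-free (a pushforward of a line bundle from a normal variety), the inclusion $\mathcal{I}_Z\subset h_*g_*L'\subset\sqrt{\mathcal{I}_Z}=\mathcal{I}_Z$ forces equality exactly as in the Claim. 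This yields $H^i(X,A\otimes M\otimes\mathcal{I}_Z)=0$ for $i>0$.

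The main obstacle I expect is the second step: making the perturbation precise enough that $(Y,\Delta_Y')$ is a legitimate simple normal crossing boundary pair with $L'-(K_Y+\Delta_Y')$ still of the form $g^*H'$ for an ample, $h$-ample $H'$, while simultaneously not disturbing $h_*g_*L'$. The tension is that one wants to add something to $\Delta$ (to kill the excess discrepancies over $S$) without creating new base locus for $M\otimes\mathcal{I}_Z^{\otimes e}$ or changing the round-up $\lceil P\rceil$ and the divisor $E_Z$ that control the direct image; the fact that everything one must modify is supported over the finite set $S$, together with the strict ampleness (not just nefness) of $A\otimes\mathcal{O}(-K_X-\Delta)$ which provides the needed slack, is what makes this work, but verifying compatibility of all the numerical conditions is the delicate point.
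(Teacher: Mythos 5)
There is a genuine gap, in two places. First, your perturbation step goes in the wrong direction: adding an effective divisor passing through the non-lc points $p_i$ to $\Delta$ (or pulling one back to $Y$ and adding it to the boundary) \emph{increases} the coefficients of the exceptional divisors lying over $S$, so it cannot ``cancel the excess'' of $B+E$; more fundamentally, no modification of the boundary by effective divisors can turn a non-log-canonical pair into a log canonical one, since the failure here is caused by $eZ$ itself, which is fixed by the statement. The only way to truncate the offending coefficients at $1$ while keeping $L'-(K_Y+\Delta_Y')$ of the form $g^*H'$ is to move the excess part $F$ (the exceptional divisor over $S$ with coefficients exceeding the lc bound) into $L'$ itself, i.e.\ to take $L'=f^*A+f^*M-E_Z-\lceil F\rceil+\lceil P\rceil$. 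But then the direct image genuinely changes: $h_*g_*L'=A\otimes M\otimes\mathcal{I}^*$ with $\mathcal{I}^*=f_*\mathcal{O}_Y(-E_Z-\lceil F\rceil)$ strictly smaller than $\mathcal{I}_Z$ at the points of $S$ in general. Second, the argument you use to dismiss this discrepancy --- that two torsion-free subsheaves of $\mathcal{O}_X$ agreeing outside the finite set $S$ must coincide --- is false: $\mathfrak{m}_p$ and $\mathcal{O}_X$ agree outside a point and are both torsion-free. Agreement in codimension $\ge 2$ forces equality only for reflexive sheaves, and ideals cutting out zero-dimensional loci are precisely the non-reflexive examples. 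Moreover the inclusion $\mathcal{I}_Z\subset h_*g_*L'$ that you invoke, which in the lc case comes from $\mathcal{O}_{\hat{X}}\subset g_*\mathcal{O}_Y(\lceil P\rceil)$, breaks down at $S$ once the correction $-\lceil F\rceil$ is present.

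The correct fix --- and what the paper does --- is the one your setup is already pointing at: accept the smaller ideal $\mathcal{I}^*=f_*\mathcal{O}_Y(-E_Z-F)$, run the identical Ambro--Fujino argument to obtain $H^i(X,A\otimes M\otimes\mathcal{I}^*)=0$ for $i>0$, and then use the short exact sequence
\begin{equation*}
0\rightarrow \mathcal{I}^* \rightarrow \mathcal{I}_Z \rightarrow \mathcal{Q} \rightarrow 0,
\end{equation*}
where $\mathcal{Q}$ is supported on the finite set $S$ and therefore has no higher cohomology; the long exact sequence then yields $H^i(X,A\otimes M\otimes\mathcal{I}_Z)=0$ for $i>0$. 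In short, the finiteness of the non-lc locus should be exploited cohomologically (skyscraper sheaves are acyclic), not by trying to force the two ideal sheaves to be equal --- they are not.
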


\begin{proof}
Apply Lemma \ref{resolution}, we have the following equation similar  to equation (\ref{resolution}),
\begin{equation*}
K_{Y}-f^*(K_X+\Delta)+e\cdot \mbox{div}(\mathcal{I}_Z\cdot \mathcal{O}_Y)=P-N-E_Z-F.
\end{equation*} 
The only difference is that there is a term $-F$ denoting the exceptional over the 
non-log canonical points.
We remark that by the assumption $F$ is non-reduced.  

Let $\mathcal{I}^*=f_*\mathcal{O}_Y(-E_Z-F)$.
Then exactly the same argument as in the proof of Theorem \ref{nadel} shows
\begin{equation*}
H^i(X, A\otimes M \otimes \mathcal{I}^*)=0, \forall i>0.
\end{equation*}

On the other hand we have,
\begin{equation*}
0\rightarrow \mathcal{I}^* \rightarrow \mathcal{I}_Z \rightarrow \mathcal{Q} \rightarrow 0,
\end{equation*}
where $\mathcal{Q}$ is a sheaf supported on some close points by assumption.
Take the  long exact sequence we have 
\begin{equation*}
H^i(X, A\otimes M \otimes \mathcal{I}_Z)=0, \forall i>0.
\end{equation*}
\end{proof}

\bibliographystyle{amsalpha}

\end{document}